\newtheorem{theorem}{Theorem}
\newtheorem{lemma}[theorem]{Lemma}
\newtheorem{examp}[theorem]{Example}
\title{Counting Abelian Squares}
\author{L. B. Richmond\\
Department of Combinatorics and Optimization\\
University of Waterloo\\
Waterloo, ON  N2L 3G1 \\
Canada\\
{\tt lbrichmo@math.uwaterloo.ca} \\
\ \\ 
Jeffrey Shallit\\
School of Computer Science\\
University of Waterloo\\
Waterloo, ON  N2L 3G1 \\
Canada\\
{\tt shallit@cs.uwaterloo.ca} 
}
\begin{document}

\maketitle

\begin{abstract}
An {\it abelian square} is a string of length $2n$ where the last $n$
symbols form a permutation of the first $n$ symbols.  In this note
we count the number of abelian squares and give an asymptotic estimate
of this quantity.
\end{abstract}

\section{Introduction}

    An {\it abelian square} of length $2n$ is a string of the
form $x x'$, where $|x| = |x'| = n > 0$ and $x'$ is a permutation of $x$.
Two abelian squares in English are {\tt reappear} and
{\tt intestines}.  Of course, the permutation can be the identity, so
ordinary squares such as {\tt murmur} and {\tt hotshots} are also considered
to be abelian squares.

Abelian squares were introduced by Erd\H{o}s \cite[p.\ 240]{Erdos:1961} 
and since then have been extensively studied in the combinatorics on
words literature (see, for example, \cite[p.\ 37]{Allouche&Shallit:2003}).
In this note we discuss enumerating the abelian squares over an alphabet
of size $k$.

\section{Preliminaries}

      Let $f_k (n)$ be the number of abelian squares of length $2n$
over an alphabet $\Sigma$ with $k$ letters.  Without loss of generality, we
assume that $\Sigma = \lbrace 1, 2, \ldots, k \rbrace$.

      Given a string $x$ with $|x| =  n$, the signature
of $x$ is defined to be the vector enumerating the number
of $1$'s, $2$'s, etc. in $x$.   (In computer science,
this vector is sometimes called the
Parikh vector.)  For example, the signature of $213313$ is $(2,1,3)$.
Hence a string $xx'$ is an abelian square iff the signature of $x$ equals
$x'$.

      The following table enumerates $f_k (n)$ for the first
few values of $k$ and $n$.

\bigskip

\begin{center}
\begin{tabular}{|r|r|r|r|r|r|r|r|r|}
\hline
$k \backslash n$ & 0 & 1 & 2 & 3 & 4 & 5 & 6 & 7 \\
\hline
2 &  1 & 2 & 6 & 20 & 70 & 252 & 924 & 3432 \\
\hline
3 & 1 & 3 & 15 & 93 & 639 & 4653 & 35169 & 272835  \\
\hline
4 & 1 & 4 & 28 & 256 & 2716 & 31504 & 387136 & 4951552 \\
\hline
5 & 1 & 5 & 45 & 545 & 7885 & 127905 & 2241225 & 41467725\\
\hline
6 & 1 & 6 & 66 & 996 & 18306 & 384156 & 8848236 & 218040696\\
\hline
\end{tabular}
\end{center}

\bigskip

     Examination of this table suggests that $f_2 (n) = {{2n} \choose n}$,
and indeed, this can be proved as follows.  Suppose we choose the positions
of the $1$'s in the first $n$ symbols; if there are $i$ of them, this
can be done in ${n \choose i}$ ways.  Once we choose these, the remaining
symbols of the first $n$ must be $2$'s.  The last $n$ symbols must have
the same signature as the first $n$, and this can be done in ${n \choose i}$
ways.  So we get
$$ f_2 (n) = \sum_{0 \leq i \leq n} {n \choose i}^2 .$$
The sequence $f_2(n)$ is sequence A000984 in Sloane's {\it On-line
Encyclopedia of Integer Sequences} \cite{Sloane}.

     There is a nice combinatorial proof that this sum is actually
${2n} \choose n$.  Consider a string of length $2n$, and choose $n$
positions in it.  If a position falls in the first half of the string,
make it $1$;
if a position falls in the last half of the string, make it $2$.  Of the
remaining unchosen positions, make them $2$ if they fall in the first half
and $1$ if they fall in the last half.
It is easy to see that this gives a bijection with the
set of abelian squares.  Thus we obtain $f_2 (n) =  {{2n} \choose n}$.

     We can now use this idea to evaluate $f_k (n)$ in terms of
$f_{k-1} (n)$.
Choose the positions of the $1$'s in the first and last halves of the
string; this can be done in ${n \choose i}^2$ ways.  
Now fill in the remaining $n-2i$ positions with $k-1$ symbols
in $f_{k-1} (n-i)$ ways.
Thus
$$ f_k (n) = \sum_{0 \leq i \leq n} {n \choose i}^2 f_{k-1} (n-i)
= \sum_{0 \leq i \leq n} {n \choose {n-i}}^2 f_{k-1} (n-i) 
= \sum_{0 \leq j \leq n} {n \choose j}^2 f_{k-1} (j) .$$

For $k = 3$ this gives
$$ f_3 (n) = \sum_{0 \leq i \leq n} {n \choose i}^2 {2i \choose i}.$$
The sequence $f_3 (n)$ is sequence A002893 in 
Sloane's {\it On-line
Encyclopedia of Integer Sequences}.

More generally, we can write $f_{k_1+k_2} (n)$ in terms of $f_{k_1} (n)$ and
$f_{k_2} (n)$.  We have
$$f_{k_1+k_2} (n) =
\sum_{0 \leq i \leq n} {n \choose i}^2 f_{k_1} (i) f_{k_2} (n-i).$$
To see this, suppose the first $n$ symbols have $i$ occurrences of
the symbols $1, 2, \ldots, k_1$.
Note that we can choose the positions where the symbols
$1, \ldots, k_1$  will go in the first $n$ symbols in
${n \choose i}$ ways, and where they will go in the last $n$ symbols
in ${n \choose i}$ ways.  Once the positions are chosen, we can fill
them in with $1, \ldots, k_1$ in $f_{k_1} (i)$ ways.  The remaining
positions can be filled with the remaining symbols
$k_1+1, k_1+2, \ldots, k_1+k_2$ in $f_{k_2} (n-i)$ ways.
Thus for $k_1 = k_2 = 2$, we get
$$ f_4 (n) = \sum_{0 \leq i \leq n} {n \choose i}^2 {{2i} \choose i} 
{{2n-2i} \choose {n-i}} .$$
The sequence $f_4 (n)$ is sequence A002895 in 
Sloane's {\it On-line
Encyclopedia of Integer Sequences}.

    Yet another formula for $f_k (n)$ is 
$$ \sum_{ n_1 + \cdots + n_{k} = n} 
{n \atopwithdelims() {n_1 \ n_2 \cdots \ n_{k}}}^2,$$
which follows from choosing the signature of the first half of the string
and then matching it in the second.
Here $n_i$ counts the number of occurrences of $i$, and
${n \atopwithdelims() {n_1 \ n_2 \ \cdots \ n_{k}}}$ is the 
multinomial coefficient
${{n!} \over {n_1! n_2! \cdots n_{k}!}}$.  As we will see, this formula
suffices to obtain the asymptotic behavior of $f_k (n)$ as
$n \rightarrow \infty$.

\section{Asymptotics}

In what follows we shamelessly apply the factorial function to noninteger
arguments, using the standard definition $x! = \Gamma(x+1)$, where $\Gamma$
is the well-known gamma function.

First let's consider the asymptotics of
\begin{equation}
{n \atopwithdelims() {n_1 \ n_2 \cdots \ n_{k}}}.
\label{mult4}
\end{equation}
We use an idea that is due (more or less) to Lagrange
\cite{Lagrange:1770}.
The maximum of the multinomial coefficient (\ref{mult4}) occurs when $n_i = {n \over k}$, so write
$n_i = {n \over k} + x_i \sqrt{n}$.  Thus
$$ n = \sum_{1 \leq i \leq k} n_i = n + \sum_{1 \leq i \leq k} x_i \sqrt{n},$$
and so $\sum_{1 \leq i \leq k} x_i = 0$.

Stirling's formula states that
\begin{equation}
n! = e^{n \log n - n} \sqrt{2\pi n} \left( 1 + O ( n^{-1} )
\right)
\end{equation}
as $n \rightarrow \infty$.

Recall that $n_i = {n \over k} + x_i \sqrt{n}$.  Using
Taylor's formula
\begin{equation}
\log (1+y) = y - {{y^2} \over 2} + O(y^3)
\end{equation}
for $y = {{x_i k} \over {\sqrt{n}}}$, we get
\begin{eqnarray*}
\log n_i &=& \log \left(  {n \over k} + x_i \sqrt{n} \right) \\
&=& \log \left( {n \over k} \left( 1 + {{x_i k} \over {\sqrt{n}}} \right) \right) \\
&=& \log {n \over k} + \log \left( 1 + {{x_i k} \over {\sqrt{n}}} \right) \\
&=& \log {n \over k} + {{x_i k} \over {\sqrt{n}}} - {1 \over 2} {{x_i^2 k^2} \over n} + O(x_i^3 n^{-3/2}). 
\end{eqnarray*}

Hence 
\begin{eqnarray*}
n_i \log n_i &=& \left( {n \over k} + 
x_i \sqrt{n} \right) \left( \log {n \over k} +
{{x_i k} \over {\sqrt{n}}} -
{1 \over 2} {{x_i^2 k^2} \over n} + O(x_i^3 n^{-3/2}) \right)  \\
&=& \left( {n \over k} +
x_i \sqrt{n} \right) \log {n \over k} + \sqrt{n} x_i + {1 \over 2} k x_i^2
+ O(x_i^3 n^{-1/2}) .
\end{eqnarray*}

Thus,
\begin{equation}
n_i \log n_i - n_i = \left( {n \over k} +
x_i \sqrt{n} \right) \log {n \over k} + {1 \over 2} k x_i^2 - {n \over k}
+ O(x_i^3 n^{-1/2}) 
\end{equation}
and hence if $|x_i| \leq n^{\epsilon}$ for some
$0 < \epsilon < {1 \over 6}$, we get
\begin{equation}
\sum_{1 \leq i \leq k} (n_i \log n_i - n_i)  =
n \log {n \over k} - n + \left( {1 \over 2} k \sum_{1 \leq i \leq k} x_i^2 
\right) + O(n^{-1/2+3\epsilon}),
\end{equation}
where we have used the fact that $\sum_{1 \leq i \leq k} x_i = 0$.

Thus
\begin{equation}
\prod_{1 \leq i \leq k} \left( {n \over k} + x_i \sqrt{n} \right) ! \sim
\exp \left( n \log {n \over k} - n + \left( {1 \over 2} k \sum_{1 \leq i \leq k} x_i^2
\right) + O(n^{-1/2+3\epsilon}) \right) (2 \pi {n \over k})^{k/2} .
\end{equation}
Hence for $|x_i| \leq n^{\epsilon}$ we get
\begin{eqnarray*}
{n \atopwithdelims() {n_1 \ n_2 \ \cdots \ n_{k}}} 
&=& {{n!} \over {\prod_{1 \leq i \leq k} ({n \over k} + x_i \sqrt{n}) !}} \\
&\sim& \exp\left( n \log k - {k \over 2} \sum_{1 \leq i \leq k} x_i^2 \right)
(2\pi n)^{(1-k)/2} k^{k/2} \\
&=& k^n \exp\left( - {k \over 2} \sum_{1 \leq i \leq k} x_i^2 \right) (2\pi n)^{(1-k)/2} k^{k/2},
\end{eqnarray*}
and hence
\begin{equation}
{n \atopwithdelims() {n_1 \ n_2 \ \cdots \ n_{k}}}^2
\sim k^{2n} \exp\left( -k \sum_{1 \leq i \leq k} x_i^2 \right)
(2\pi n)^{1-k} k^k.
\end{equation}

Now let's approximate the sum
$$ \sum_{n_1 + n_2 + \cdots + n_{k} = n} 
{n \atopwithdelims() {n_1 \ n_2 \cdots \ n_{k}}}^2 $$
with the multiple integral
$$
k^{2n} (2 \pi n)^{1-k} k^k
\underbrace{\int_{0}^{n}
\int_{0}^{n} \cdots \int_{0}^{n}}_{k-1} 
\exp \left( -k \sum_{1 \leq i \leq k} x_i^2 \right) dn_1 dn_2 \cdots dn_{k-1}
=$$
\begin{equation}
k^{2n} (2 \pi n)^{1-k} k^k n^{(k-1)/2}
\underbrace{\int_{-\infty}^{\infty}
\int_{-\infty}^{\infty} \cdots \int_{-\infty}^{\infty}}_{k-1}
\exp \left( -k \sum_{1 \leq i \leq k-1} x_i^2
- k (\sum_{1 \leq i \leq k-1} x_i)^2 \right) dx_1 dx_2 \cdots dx_{k-1}  .
\label{integ}
\end{equation}
where we have used the fact that $dn_i = \sqrt{n} \, d x_i$ and
$x_k = -x_1-x_2-\cdots-x_{k-1}$.

Note that the integrand is guaranteed to be asymptotic to the quantity
we want only if $|x_i| \leq n^\epsilon$, but outside this region the
integrand is exponentially small.

In order to evaluate the multiple integral (\ref{integ}), we need three
lemmas.

\begin{lemma}
If $a > 0$, then
$$\int_{-\infty}^\infty \exp \left( -(a x^2 + bx + c) \right) dx
= \exp \left( {{b^2} \over 4a} - c \right) \pi^{1/2} a^{-1/2}.$$
\label{abc}
\end{lemma}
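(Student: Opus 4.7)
The plan is to reduce the integral to the classical Gaussian integral $\int_{-\infty}^\infty e^{-u^2}\, du = \pi^{1/2}$ by completing the square in the exponent and making a linear change of variable. Since $a > 0$, both steps are unambiguous.

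First I would rewrite the quadratic as
$$ax^2 + bx + c = a\left(x + \frac{b}{2a}\right)^2 + c - \frac{b^2}{4a},$$
and pull the $x$-independent factor out of the integral to obtain
$$\int_{-\infty}^\infty \exp(-(ax^2+bx+c))\, dx = \exp\!\left(\frac{b^2}{4a} - c\right) \int_{-\infty}^\infty \exp\!\left(-a\left(x+\tfrac{b}{2a}\right)^2\right) dx.$$
Then I would substitute $u = \sqrt{a}\,(x + b/(2a))$, so that $du = \sqrt{a}\, dx$; because $a > 0$, $\sqrt{a}$ is a positive real and the substitution is monotone, so the improper limits remain $\pm\infty$. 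The remaining integral becomes $a^{-1/2} \int_{-\infty}^\infty e^{-u^2}\, du$.

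Finally I would invoke the standard evaluation $\int_{-\infty}^\infty e^{-u^2}\, du = \pi^{1/2}$ (which can be proved, for instance, by squaring the integral and converting to polar coordinates). Combining the three factors $\exp(b^2/(4a) - c)$, $a^{-1/2}$, and $\pi^{1/2}$ yields the claimed formula.

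There is essentially no obstacle here; the only items worth checking are that the hypothesis $a > 0$ ensures both the convergence of the improper integral (by rapid Gaussian decay at $\pm\infty$) and the legitimacy of the real substitution $u = \sqrt{a}(x+b/(2a))$. Neither issue requires any nontrivial work.
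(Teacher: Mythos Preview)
Your proof is correct and follows essentially the same route as the paper: complete the square, pull out the constant factor, and reduce to the standard Gaussian integral by a linear substitution. The only cosmetic difference is that the paper performs the substitution in two steps ($u = x + b/(2a)$, then $v = a^{1/2}u$) whereas you do it in one.
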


\begin{proof}
This can essentially
be found, for example, in \cite[Eq.\ 3.323.2]{Gradshteyn&Ryzhik:1965},
but for completeness we give the proof (also see \cite{Moll:2006}).

Complete the square, writing
$$ ax ^2 + bx + c = a \left( x + {b \over {2a}} \right)^2 +
c - {{b^2} \over {4a}} .$$
Make the substitution $u = x + {b \over {2a}}$ to get
$$ \int_{-\infty}^\infty \exp \left( -(a x^2 + bx + c) \right) dx
= \exp \left( {{b^2} \over {4a}} - c \right)
\int_{-\infty}^{\infty} \exp(-au^2) du.$$
Now make the substitution $v = a^{1/2} u$ to get
$$ \int_{-\infty}^{\infty} \exp(-au^2) du = a^{-1/2} \int_{-\infty}^{\infty}
\exp(-v^2) dv  .$$
The result now follows from the well-known evaluation
$\int_{-\infty}^\infty \exp(-v^2) dv = \pi^{1/2}$.
\end{proof}

\begin{lemma}
Let $S_{m,0} = \left ( \sum_{1 \leq i \leq m} x_i^2  \right)
+ \left( \sum_{1 \leq i \leq m} x_i \right)^2$, and
for $1 \leq l \leq m$ define $S_{m,l}$ by
\begin{equation}
 \pi^{1/2}  \left( {l \over {l+1}} \right)^{1/2} \exp(-S_{m,l})
= \int_{-\infty}^{\infty} \exp ( - S_{m,l-1}) d x_l .
\label{smldef}
\end{equation}
Then
$$ S_{m,l} = {{l+2} \over {l+1}} \sum_{l+1 \leq j \leq m} x_j^2 
 + {2 \over {l+1}} \sum_{l+1 \leq i < j \leq m} x_i x_j.$$ 
\label{two}
\end{lemma}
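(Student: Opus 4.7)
The natural approach is induction on $l$, using Lemma \ref{abc} to carry out each integration step and match the result against the claimed closed form.

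For the base case $l=0$, I would simply expand the square in the definition of $S_{m,0}$:
$S_{m,0} = \sum_{1\le i\le m} x_i^2 + \sum_{1\le i\le m} x_i^2 + 2\sum_{1\le i<j\le m} x_i x_j = 2\sum_{1\le j\le m} x_j^2 + 2\sum_{1\le i<j\le m} x_i x_j,$
which agrees with the claimed formula at $l=0$ (where the prefactors become $\frac{0+2}{0+1}=2$ and $\frac{2}{0+1}=2$).

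For the inductive step, assume the formula holds for $S_{m,l-1}$. The plan is to regard $S_{m,l-1}$ as a quadratic polynomial in the single variable $x_l$, collecting terms so as to read off the coefficients $a,b,c$ needed by Lemma \ref{abc}. Separating the contributions of $x_l^2$, of $x_l$ (which comes from the cross terms $x_l x_j$ with $j>l$), and of the remaining variables, I would get
\begin{align*}
a &= \tfrac{l+1}{l}, \\
b &= \tfrac{2}{l}\sum_{l+1\le j\le m} x_j, \\
c &= \tfrac{l+1}{l}\sum_{l+1\le j\le m} x_j^2 + \tfrac{2}{l}\sum_{l+1\le i<j\le m} x_i x_j.
\end{align*}
Applying Lemma \ref{abc} yields the integral as $\pi^{1/2}(l/(l+1))^{1/2}\exp(b^2/(4a)-c)$. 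Comparing with the defining relation (\ref{smldef}), the $\pi^{1/2}(l/(l+1))^{1/2}$ factors cancel exactly, so one reads off $S_{m,l}=c-b^2/(4a)$.

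The main (and only really delicate) obstacle is then the bookkeeping to verify that $c-b^2/(4a)$ simplifies to the claimed expression. Expanding $b^2 = \tfrac{4}{l^2}\bigl(\sum_{j>l}x_j^2 + 2\sum_{l<i<j}x_ix_j\bigr)$ and dividing by $4a=4(l+1)/l$ gives $b^2/(4a)=\tfrac{1}{l(l+1)}\sum_{j>l}x_j^2 + \tfrac{2}{l(l+1)}\sum_{l<i<j}x_ix_j$. Subtracting from $c$, the coefficient of $\sum_{j>l}x_j^2$ becomes $\tfrac{l+1}{l}-\tfrac{1}{l(l+1)}=\tfrac{(l+1)^2-1}{l(l+1)}=\tfrac{l+2}{l+1}$, and the coefficient of $\sum_{l<i<j}x_ix_j$ becomes $\tfrac{2}{l}-\tfrac{2}{l(l+1)}=\tfrac{2l}{l(l+1)}=\tfrac{2}{l+1}$, matching the stated formula and closing the induction.
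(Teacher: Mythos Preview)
Your proof is correct and follows essentially the same approach as the paper's: induction on $l$, applying Lemma~\ref{abc} at each step with the appropriate $a,b,c$, and reducing the verification to the algebraic identity $c-b^2/(4a)=S_{m,l}$. The only cosmetic difference is that the paper indexes the inductive step as $l\mapsto l+1$ rather than your $l-1\mapsto l$, so its coefficients are $a=\tfrac{l+2}{l+1}$ etc., which agree with yours after the obvious shift.
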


\begin{proof}
By induction on $l$.  Clearly the result is true for $l = 0$.  Now
apply Lemma~\ref{abc}, with 
$a = {{l+2} \over {l+1}}$,
$b = {2 \over {l+1}} \sum_{l+2 \leq j \leq m} x_j$,
and
$c = {{l+2} \over {l+1}} \sum_{l+2 \leq j \leq m} x_j^2
+ {2 \over {l+1}} \sum_{l+2 \leq i < j \leq m} x_i x_j$.
We now have
\begin{eqnarray*}
c - {{b^2} \over {4a}} &=&
{{l+2} \over {l+1}} \sum_{l+2 \leq j \leq m} x_j^2 +
{2 \over {l+1}} \sum_{l+2 \leq i < j \leq m} x_i x_j -
{{{4 \over {(l+1)^2}} \left( \sum_{l+2 \leq j \leq m} x_j \right)^2 }
\over {4 {{l+2} \over {l+1}}}} \\
&=& {{l+2} \over {l+1}} \sum_{l+2 \leq j \leq m} x_j^2 + {2 \over {l+1}}
\sum_{l+2\leq i < j \leq m} x_i x_j -
{1 \over { (l+1)(l+2)}} \left( \sum_{l+2 \leq j \leq m} x_j^2 +
2\sum_{l+2 \leq i < j \leq m} x_i x_j \right)   \\
&=& {{ (l+2)^2 - 1} \over {(l+1)(l+2)}} \sum_{l+2 \leq j \leq m} x_j^2
+ {{2(l+2) - 2} \over {(l+1)(l+2)}} \sum_{l+2 \leq i < j \leq m} x_i x_j \\
&=& {{l+3} \over {l+2}} \sum_{l+2 \leq j \leq m} x_j^2 
+ {2 \over {l+2}} \sum_{l+2 \leq i < j \leq n} x_i x_j \\
&=& S_{m,l+1}.
\end{eqnarray*}

\end{proof}

Thus we get
\begin{lemma}
$$
\underbrace{\int_{-\infty}^{\infty}
\int_{-\infty}^{\infty} \cdots \int_{-\infty}^{\infty}}_{m}
\exp \left( -S_{m,0} \right) dx_1 dx_2 \cdots dx_{m}  
= 
\pi^{m/2} (m+1)^{-1/2} .
$$
\end{lemma}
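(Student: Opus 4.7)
The plan is to iterate the identity (\ref{smldef}) of Lemma~\ref{two} so that the $m$-fold integral collapses via a telescoping product, reducing everything to a simple constant.

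Concretely, I would integrate one variable at a time in the order $x_1, x_2, \ldots, x_m$. Applying (\ref{smldef}) with $l = 1$ converts $\int_{-\infty}^\infty \exp(-S_{m,0}) \, dx_1$ into $\pi^{1/2}(1/2)^{1/2} \exp(-S_{m,1})$; applying (\ref{smldef}) with $l = 2$ turns $\int_{-\infty}^\infty \exp(-S_{m,1}) \, dx_2$ into $\pi^{1/2}(2/3)^{1/2} \exp(-S_{m,2})$; and so on. After $m$ iterations the accumulated constant is
$$\prod_{l=1}^{m} \pi^{1/2} \left( \frac{l}{l+1} \right)^{1/2} = \pi^{m/2} \left( \prod_{l=1}^{m} \frac{l}{l+1} \right)^{\!1/2} = \pi^{m/2} (m+1)^{-1/2},$$
since the product inside the square root telescopes.

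The only remaining point is to verify that the surviving exponential factor $\exp(-S_{m,m})$ equals $1$. Substituting $l = m$ into the explicit formula from Lemma~\ref{two} gives
$$S_{m,m} = \frac{m+2}{m+1} \sum_{m+1 \leq j \leq m} x_j^2 + \frac{2}{m+1} \sum_{m+1 \leq i < j \leq m} x_i x_j,$$
and both index ranges are empty, so $S_{m,m} = 0$.

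I do not expect any genuine obstacle here: the essential algebraic work was done in Lemma~\ref{two}, and what remains is the telescoping product and the observation that the indexing sets defining $S_{m,m}$ are empty. The one thing worth being careful about is that at each step the integrand, viewed as a function of the next variable $x_l$, is indeed of the form $\exp(-(ax_l^2 + bx_l + c))$ with $a = (l+1)/l > 0$, so that Lemma~\ref{abc} applies; this is already implicit in the proof of Lemma~\ref{two} but deserves a sentence in the writeup.
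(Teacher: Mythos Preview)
Your proposal is correct and follows essentially the same approach as the paper: iterate Lemma~\ref{two} and collapse the resulting product $\prod_{l=1}^m \pi^{1/2}(l/(l+1))^{1/2}$ by telescoping. Your explicit checks that $S_{m,m}=0$ (empty index ranges) and that the leading coefficient $a=(l+1)/l>0$ at each step are nice additions that the paper leaves implicit.
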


\begin{proof}
Apply Lemma~\ref{two} iteratively, obtaining
\begin{eqnarray*}
\underbrace{\int_{-\infty}^{\infty}
\int_{-\infty}^{\infty} \cdots \int_{-\infty}^{\infty}}_{m}
\exp(-S_{m,0}) dx_1 dx_2 \cdots dx_{m}
&=&
\pi^{1/2} ({1 \over 2})^{1/2} \pi^{1/2} ({2 \over 3})^{1/2} \cdots
\pi^{1/2} ({{m} \over {m+1}})^{1/2} \\
&=& \pi^{m/2} (m+1)^{-1/2},
\end{eqnarray*}
where we have used telescoping cancellation.
\end{proof}

It now follows (by a change of variables), that

\begin{equation}
\underbrace{\int_{-\infty}^{\infty}
\int_{-\infty}^{\infty} \cdots \int_{-\infty}^{\infty}}_{k-1}
\exp \left( -k S_{k-1,0} \right) dx_1 dx_2 \cdots dx_{k-1} 
= \pi^{(k-1)/2} k^{-k/2},
\end{equation}
and so
$$ \sum_{ n_1 + n_2 + \cdots + n_{k} = n} 
{n \atopwithdelims() {n_1 \ n_2 \cdots \ n_{k}}}^2 \sim
k^{2n} (2 \pi n)^{1-k} k^k n^{(k-1)/2}  k^{-k/2} \pi^{(k-1)/2} 
= k^{2n+k/2} 2^{1-k} \pi^{(1-k)/2} n^{(1-k)/2}.$$

We have proved

\begin{theorem}
Let $k$ be an integer $\geq 2$.  Then, as $n \rightarrow \infty$, we have
$$ f_k (n) \sim k^{2n+k/2} (4 \pi n)^{(1-k)/2}.$$
\label{main}
\end{theorem}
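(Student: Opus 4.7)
The plan is to assemble the ingredients already displayed in the preliminaries and in the three lemmas. Starting from the multinomial identity
$$f_k(n) = \sum_{n_1+\cdots+n_k=n}\binom{n}{n_1,n_2,\ldots,n_k}^{\!2},$$
I would substitute the Stirling-based asymptotic derived earlier for the squared multinomial coefficient in the scaling $n_i = n/k + x_i\sqrt{n}$, reducing the problem to estimating a $(k-1)$-dimensional Riemann sum whose continuous limit is the Gaussian integral $\int \exp(-k S_{k-1,0})\,dx_1\cdots dx_{k-1}$. That integral has already been evaluated in the displayed equation preceding the theorem, giving
$$\sum_{n_1+\cdots+n_k=n}\binom{n}{n_1,n_2,\ldots,n_k}^{\!2} \sim k^{2n+k/2}\,2^{1-k}\pi^{(1-k)/2}\,n^{(1-k)/2}.$$

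The main obstacle is rigorously justifying the passage from the discrete sum to the integral, because the pointwise asymptotic for the multinomial coefficient is only proved in the regime $|x_i|\le n^\epsilon$. I would split the sum into a bulk region where every $x_i$ satisfies $|x_i|\le n^\epsilon$ (with $0<\epsilon<1/6$, as required earlier) and a tail. On the bulk, the established pointwise asymptotic converts the sum into a Riemann sum for the truncated Gaussian integral, with spacing $1/\sqrt{n}$ in each coordinate, and the smoothness plus uniform error term $O(n^{-1/2+3\epsilon})$ allows the sum and the integral to differ by a factor $1+o(1)$. Extending the truncated integral to all of $\mathbb{R}^{k-1}$ costs only a doubly-exponentially small error because $\exp(-k\sum x_i^2)$ decays like $\exp(-kn^{2\epsilon})$ at the boundary. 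For the tail, I would use log-concavity of the multinomial coefficient in the $n_i$: its maximum on the complement of the bulk is at most $k^n \exp(-cn^{2\epsilon})$ for some constant $c>0$, and there are at most $n^{k-1}$ terms, so the entire tail is superpolynomially smaller than $k^{2n} n^{(1-k)/2}$.

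Once the sum-to-integral step is validated, it only remains to combine the prefactors and to rewrite them in the form claimed in the theorem. Using $4^{(1-k)/2}=2^{1-k}$ gives $2^{1-k}\pi^{(1-k)/2} = (4\pi)^{(1-k)/2}$, so
$$f_k(n) \sim k^{2n+k/2}\,(4\pi n)^{(1-k)/2},$$
which is Theorem~\ref{main}. The expected source of difficulty is purely in the analytic justification of the Laplace-type approximation; the algebraic manipulations, thanks to the telescoping evaluation of the Gaussian integral in Lemma~\ref{two} and its corollary, are already clean and require no further work.
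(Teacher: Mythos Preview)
Your proposal is correct and follows exactly the paper's route: the Stirling expansion of the multinomial coefficient in the scaling $n_i=n/k+x_i\sqrt{n}$, the replacement of the sum by the $(k-1)$-fold Gaussian integral evaluated via the lemmas, and the final combination of prefactors. If anything you are more careful than the paper itself, which dispatches the tail with the single remark that ``outside this region the integrand is exponentially small,'' while you spell out the bulk/tail split and invoke log-concavity to bound the tail.
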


\section{Remark}

Our original motivation for estimating the number of abelian squares 
of length $2n$ over an alphabet of size $k$ was an attempt to use the
Lov\'asz local lemma \cite[Chap.\ 5]{Alon&Spencer:2000} to prove the existence of an infinite word avoiding abelian
squares.  However, since by Theorem~\ref{main} the chance that a randomly
chosen string of length $2n$ is an abelian square is
asymptotically 
$$f_k(n)/k^{2n} \sim k^{k/2} (4 \pi n)^{(1-k)/2} = \Theta(n^{(1-k)/2}),$$
this approach
seems unlikely to work.

\section{Acknowledgments}

We acknowledge with thanks conversations with George Labahn and Stephen New.


\begin{thebibliography}{1}

\bibitem{Allouche&Shallit:2003}
J.-P. Allouche and J.~Shallit.
\newblock {\em Automatic Sequences: Theory, Applications, Generalizations}.
\newblock 2003.

\bibitem{Alon&Spencer:2000}
N.~Alon and J.~H. Spencer.
\newblock {\em The Probabilistic Method}.
\newblock Wiley, 2000.

\bibitem{Erdos:1961}
P.~{Erd\H os}.
\newblock Some unsolved problems.
\newblock {\em Magyar Tud. Akad. Mat. {Kutat\'o} Int. {K\"ozl.}}, 6:221--254,
  1961.

\bibitem{Gradshteyn&Ryzhik:1965}
I. S. Gradshteyn and I. W. Ryzhik.
\newblock {\em Tables of Integrals, Series, and Products}.
\newblock Academic Press, 1965.

\bibitem{Lagrange:1770}
J.~L. Lagrange.
\newblock {M\'emoire} sur {l'utilit\'e} de la {m\'ethode} de prendre le milieu
  entre les {r\'esultats} de plusieurs observations.
\newblock {\em Miscellanea Taurinensia}, 5, 1770--1773.
\newblock Reprinted in {\it Oeuvres}, Vol.\ 2, pp.\ 173--234.

\bibitem{Moll:2006}
V. S. Moll.
\newblock The integrals in Gradshteyn and Rhyzik [sic].  Part 13:  Evaluation using the error function.
\newblock Available at {\tt http://www.math.tulane.edu/\char'176vhm/web\_html/erfweb.pdf}, October 4 2006.

\bibitem{Sloane}
N. J. A. Sloane.
\newblock {\it The On-Line Encyclopedia of Integer Sequences}.
\newblock Available at {\tt http://www.research.att.com/\char'176njas/sequences/}, 2008.

\end{thebibliography}
\end{document}